\def\correcthref{\hyper@anchor{\@currentHref}}
\newtheorem{theorem}{Theorem}
\newtheorem{lemma}[theorem]{Lemma}
\newcommand\eps{\varepsilon}
\newcommand\T{\mathcal{T}}
\newcommand\R{\mathbb{R}}
\DeclareMathOperator\GK{C_{\mathsf{GK}}}
\newcommand\ing{\begin{tikzpicture}[scale=0.13\ht\strutbox]%
        \draw[line width=0.015cm](0,0)--(0.055,0.22)--(0.22,0.22);
        \draw[line width=0.035cm](0.22,0.22)--(0.165,0);
        \draw[line width=0.025cm](0.165,0)--(0,0);
   \end{tikzpicture}}
\renewcommand\Delta{\begin{tikzpicture}[scale=0.13\ht\strutbox]%
       \draw[line width=0.015cm](0,0)--(0.137+0.02,0.22);
       \draw[line width=0.035cm](0.137+0.02,0.22)--(0.165+0.04,0);
       \draw[line width=0.025cm](0.165+0.04,0)--(0,0);
   \end{tikzpicture}}
\newcommand\XYZ{{XY\!Z}}
\newcommand\I{I\xspace}
\renewcommand\H{H\xspace}
\def\|{\mkern 1.4mu\mathord{|}\mkern 1.4mu}
\renewcommand{\ge}{\geqslant}
\renewcommand{\le}{\leqslant}
\title{\huge\bf A short proof of the G\'acs--K\"orner theorem}
\author{\IEEEauthorblockN{Laszlo Csirmaz}\\
\IEEEauthorblockA{R\'enyi Institute, Budapest and UTIA,
Prague}\\
\IEEEauthorblockA{email: csirmaz@renyi.hu}}
\begin{document}
\maketitle
\begin{abstract}
We present a short proof of a celebrated result of G\'acs and K\"orner
giving sufficient and necessary condition on the joint distribution of two
discrete random variables $X$ and $Y$ for the case when their mutual information
matches the extractable (in the limit) common information. Our proof is
based on the observation that the mere existence of certain random variables
jointly distributed with $X$ and $Y$ can impose restriction on all random
variables jointly distributed with $X$ and $Y$.
\end{abstract}

\begin{IEEEkeywords}
Mutual information; common information; non-Shannon
information inequality; rate region.

\textit{AMS Classification Numbers}----%
94A15; 94A24
\end{IEEEkeywords}

\section{Preliminaries}\label{sec:intro}

Ahlswede, G\'acs, K\"orner, Wyner \cite{a-k,a-2,gacs-korner,wyner} studied
the problem of extracting common information from a pair of correlated
discrete memoryless sources $X$ and $Y$. The G\'acs-K\"orner common
information, denoted by $\GK=\GK(X;Y)$, is the rate of randomness which can
be simultaneously extracted from both sources. This rate is clearly bounded
by the mutual information $\I(X;Y)$. The celebrated result of G\'acs and
K\"orner says that $\GK$ is zero unless the non-zero elements of the joint
probability matrix have a block structure, and achieves the theoretical
maximum only when within each block the sources are independent. Their proof
in \cite{gacs-korner} is quite involved and uses deep theorems on ergodic
processes. Based on Romachchenko's idea of chain independence
\cite{Rom2000}, in \cite{MMchain} Makarychev et al presented a direct, while
still lengthy and technical, proof of the first part of G\'acs and
K\"orner's result. In this note we give a short proof of both parts.

Our starting point is the tension region $\T(X;Y)$ introduced by Prabhakaran
et al in \cite{PP14}. This region is the set of the triplets of real numbers
$$
\langle\: \I(X;Z\|Y),\: \I(Y;Z\|X),\:\I(X;Y\|Z)\:\rangle \in \R^3,
$$
where $Z$ runs over all random variables jointly distributed with $X$ and
$Y$. Observe that this definition differs slightly from the one in
\cite{PP14} as the increasing hull operator is not applied to this point
set. Figure \ref{fig:ten1} illustrates the lower part of this region 
facing the origin for a particular distribution. 
More information about this region can be found in \cite{Li-Gamal}.
\begin{figure}
\centering
\includegraphics[width=6cm]{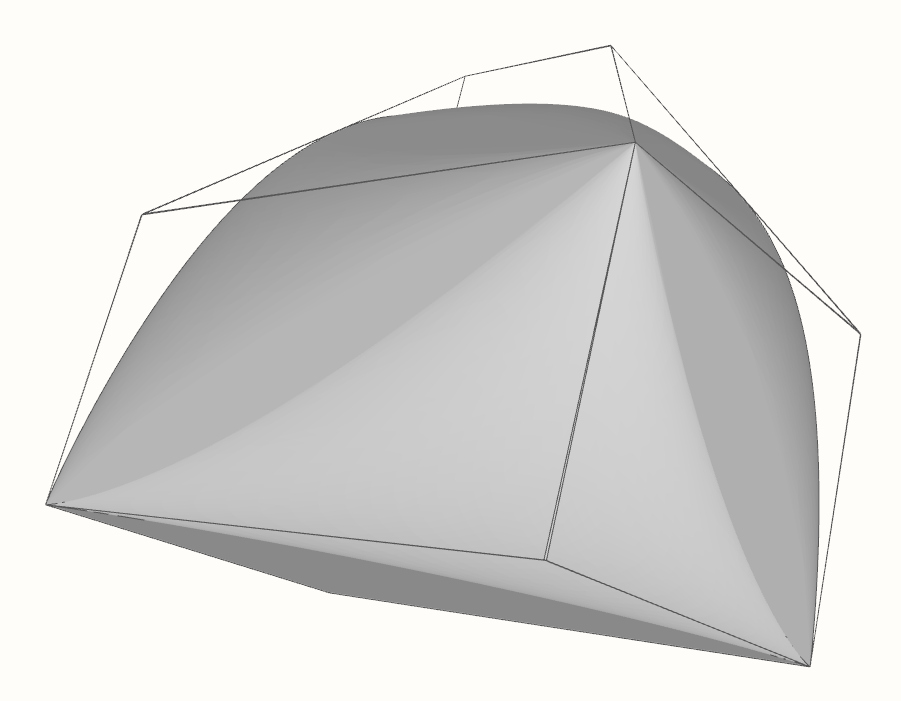}
\begin{tikzpicture}\useasboundingbox(0,0) rectangle (0,0);
\draw[->] (-2.49, 0.93) -- (-1.66,4.8);
\draw (-1.66,4.8) node[below right] {\footnotesize$\I(X;Y|Z)$};
\draw[->] (-2.49, 0.93) -- (-6.5, 1.38);
\draw (-6.5,1.28) node[below] {\footnotesize$\I(X;Z|Y)$};
\draw[->] (-2.49, 0.93) -- (-0.19, 0.01);
\draw (0.1,0.1) node[above] {\footnotesize$\I(Y;Z|X)$};
\end{tikzpicture}
\caption{Tension region of a binary distribution}\label{fig:ten1}
\end{figure}
Theorem \ref{thm:basic} summerizes some facts about this region which 
will be used. All facts, except maybe the monotonicty stated in 3) -- for
which we give a direct proof -- are standard \cite{CsK81}. For details 
consult \cite{PP14}.
\begin{theorem}\correcthref\label{thm:basic}
\begin{enumerate}
\item $Z$ can be restricted to have alphabet size $|\mathcal X| |\mathcal
Y|+3$. Consequently $\T(X;Y)$ is compact as a continuous image of a
compact set.
\item $\T(X;Y)$ is convex.
\item If $(X,Y)$ and $(X',Y')$ are independent, then the Minkowsi
(that is, pointwise vector) sum of $\T(X;Y)$ and $\T(X';Y')$
is a subset and a (coordinatewise) lower part of $\T(XX';YY')$.
\item A consequence of {\upshape 2)} and {\upshape 3)} is that
$\T(X^n;Y^n)\supseteq n \T(X;Y)$ and $\T(X^n;Y^n)\ge n\T(X;Y)$,
where $(X^n,Y^n)$ are $n$ {i.i.d.} copies of $(X,Y)$.
\end{enumerate}
\end{theorem}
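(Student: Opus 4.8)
The plan is to dispatch the four items in order, concentrating essentially all the work in the monotonicity half of item~3); item~4) then follows formally. For item~1) I would invoke the standard support lemma (Fenchel--Eggleston--Carath\'eodory). Writing the coordinates as $\I(X;Z\|Y)=H(X\|Y)-H(X\|YZ)$, $\I(Y;Z\|X)=H(Y\|X)-H(Y\|XZ)$ and $\I(X;Y\|Z)=H(X\|Z)-H(X\|YZ)$ shows that, once the conditional laws $P_{XY\|Z=z}$ are fixed, the triple is determined by the $P_Z$-averages of the continuous functionals $H(X\|YZ)$, $H(Y\|XZ)$, $H(X\|Z)$ together with the averages giving $P_{XY}$; the support lemma then yields a $Z$ on at most $|\mathcal X||\mathcal Y|+3$ letters realising the same point. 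Compactness is then immediate, since the stochastic kernels $P_{Z\|XY}$ into a fixed finite alphabet form a compact set on which the triple depends continuously. For item~2) I would use time-sharing: given $Z_0,Z_1$ realising $t_0,t_1$ and $\lambda\in[0,1]$, take $Q\sim\mathrm{Bernoulli}(\lambda)$ independent of $(X,Y)$, let $Z$ follow $P_{Z_i\|XY}$ conditionally on $\{Q=i\}$, and use the pair $(Z,Q)$ as the new auxiliary variable; independence of $Q$ kills $\I(X;Q\|Y)$ and $\I(Y;Q\|X)$, and the chain rule turns every coordinate into its $Q$-average, i.e.\ into $\lambda t_0+(1-\lambda)t_1$.

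For the inclusion in item~3), given $Z$ for $(X,Y)$ and $Z'$ for $(X',Y')$ realising prescribed points, I would couple the variables so that the triple $(X,Y,Z)$ is independent of $(X',Y',Z')$ --- consistent with $(X,Y)\perp(X',Y')$ --- and take $(Z,Z')$ as auxiliary for the product source. Under this independence every conditional entropy in sight splits additively, for instance $H(XX'\|ZZ')=H(X\|Z)+H(X'\|Z')$ and $H(XX'\|YY'ZZ')=H(X\|YZ)+H(X'\|Y'Z')$, so $(Z,Z')$ realises exactly the vector sum and hence $\T(X;Y)+\T(X';Y')\subseteq\T(XX';YY')$.

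The monotonicity (``lower part'') direction is the crux and the step I expect to be the real obstacle. Given an \emph{arbitrary} $W$ jointly distributed with $(XX',YY')$, the idea is not to symmetrise but to take $Z:=W$ as auxiliary for $(X,Y)$ while taking the \emph{asymmetric} choice $Z':=(W,X,Y)$ as auxiliary for $(X',Y')$. Expanding $\I(XX';W\|YY')$, $\I(YY';W\|XX')$ and $\I(XX';YY'\|W)$ by the chain rule, and using the conditional independences $X\perp Y'\mid Y$, $X'\perp(X,Y)\mid Y'$, $Y'\perp(X,Y)\mid X'$, $Y\perp X'\mid X$ that follow from $(X,Y)\perp(X',Y')$, one checks that these three quantities exceed, respectively, $\I(X;Z\|Y)+\I(X';Z'\|Y')$, $\I(Y;Z\|X)+\I(Y';Z'\|X')$ and $\I(X;Y\|Z)+\I(X';Y'\|Z')$ by the manifestly nonnegative slack terms $\I(X;Y'\|YW)$, $\I(Y;X'\|XW)$ and $\I(X;Y'\|YW)+\I(Y;X'\|XW)$. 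Consequently every point of $\T(XX';YY')$ dominates coordinatewise a point of the Minkowski sum, which is exactly the claimed lower-part property. The delicate part is precisely this bookkeeping --- tracking which conditioning terms drop out by independence and which survive; the symmetric choice $Z'=W$ does not obviously work, so the asymmetry is the point.

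Item~4) is then formal. By item~3), $\T(X^n;Y^n)\supseteq\T(X^{n-1};Y^{n-1})+\T(X;Y)$ and likewise $\T(X^n;Y^n)\ge\T(X^{n-1};Y^{n-1})+\T(X;Y)$; the Minkowski sum is monotone in each argument, and $(n-1)\T(X;Y)+\T(X;Y)=n\T(X;Y)$ since $\T(X;Y)$ is convex (item~2)). Induction on $n$ then gives $\T(X^n;Y^n)\supseteq n\T(X;Y)$ and $\T(X^n;Y^n)\ge n\T(X;Y)$.
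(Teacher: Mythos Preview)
Your proposal is correct and follows essentially the same route as the paper: for the crucial lower-part direction of item~3) the paper also takes the asymmetric choice, using $Z$ as auxiliary for $(X,Y)$ and $XYZ$ as auxiliary for $(X',Y')$, and then bounds the three coordinates via the same chain-rule expansions (your slack terms $\I(X;Y'\mid YW)$ and $\I(Y;X'\mid XW)$ are exactly what the paper's Shannon inequalities drop). For items~1), 2), the inclusion half of~3), and item~4) the paper simply declares them standard and defers to references, so your sketches in fact supply more detail than the paper does.
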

\begin{proof}
We prove only that the Minkowsi sum is a lower part of $\T(XX';YY')$.
Namely, for every element $p\in
\T(XX';YY')$ one can find points $q\in\T(X;Y)$ and $q'\in\T(X';Y')$ such
that $q+q'\le p$ coordinatewise. Suppose $p$ is generated by $Z$ which is
jointly distributed with $XX'YY'$. The Shannon inequalities
\begin{align*}
\I(XX';YY'|Z) &\ge \I(X;Y|Z) + \I(X';Y'|XYZ), \\
\I(XX';Z|YY') &\ge \I(X;Z|Y) + \I(X';XYZ|Y')-\I(XY;X'Y'), \\
\I(YY';Z|XX') &\ge \I(Y;Z|X) + \I(Y';XYZ|X')-\I(XY;X'Y')
\end{align*}
show that the point $q$ generated by $Z$ and the point $q'$ generated by
$XYZ$ work as required, since $XY$ and $X'Y'$ are independent.
\end{proof}

The G\'acs-K\"orner common information 
$\GK(X;Y)$ can be obtained from the tension region as follows, see
\cite[Corollary 3.3]{PP14}:
\begin{equation}\label{eq:GK}
  \GK(X;Y)=\I(X;Y) - \min\{ r: \langle 0,0,r\rangle \in\T(X;Y) \,\}
\end{equation}
Consequently the G\'acs-K\"orner theorem \cite{gacs-korner} is equivalent to
\begin{theorem}[G\'acs-K\"orner, equivalent form]\correcthref\label{thm:alt}
\begin{enumerate}
\item $\T(X;Y)$
contains only the right endpoint of the interval connecting the origin to
$(0,0,\I(X,Y))$
unless the probability matrix has block structure.
\item $\T(X;Y)$
is separated from the origin (that is, $(0,0,0)$ is not in it) except when,
additionally, $X$ and $Y$ are independent within each such a block.
\end{enumerate}
\end{theorem}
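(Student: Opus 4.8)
\medskip\noindent\textbf{Proof plan.}
The plan is to obtain the whole of Theorem~\ref{thm:alt} from a single structural lemma, after disposing of three straightforward implications. Say the probability matrix has a \emph{(nontrivial) block structure} if the bipartite support graph of $(X,Y)$ — with an edge $xy$ whenever $\Prob(X{=}x,Y{=}y)>0$ — is disconnected, and let $Q$ be the component label, which is simultaneously a function of $X$ and of $Y$. Taking $Z:=Q$ gives $\I(X;Z\|Y)=\I(Y;Z\|X)=0$ and $\I(X;Y\|Z)=\I(X;Y\|Q)=\I(X;Y)-H(Q)\in[0,\I(X;Y)]$, so $\langle 0,0,\I(X;Y)-H(Q)\rangle$ is a point of $\T(X;Y)$ on the segment and, since $H(Q)>0$, different from its right endpoint; this is the ``if'' half of {\upshape 1)}. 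If in addition $X$ and $Y$ are independent inside each block, then $\I(X;Y\|Q)=\sum_i\Prob(Q{=}i)\,\I(X;Y\|Q{=}i)=0$, so $Z=Q$ witnesses the origin, which is one half of {\upshape 2)}. (By the stated definition ``$\T(X;Y)$ is separated from the origin'' simply means $\langle0,0,0\rangle\notin\T(X;Y)$; compactness, Theorem~\ref{thm:basic}, would in fact turn this into a positive distance.)

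For the converse the sole ingredient is the following \emph{double Markov lemma}. Assume $Z$ is jointly distributed with $X,Y$ and $\I(X;Z\|Y)=\I(Y;Z\|X)=0$, i.e. both $X-Y-Z$ and $Y-X-Z$ are Markov chains. Then on each connected component $B$ of the support graph the conditional law $p_x:=\Prob(Z{=}\cdot\|X{=}x)$ does not depend on $x\in B$ and equals $q_y:=\Prob(Z{=}\cdot\|Y{=}y)$ for every $y\in B$. To see this, observe that the two Markov conditions say precisely that $p_x=\sum_y\Prob(Y{=}y\|X{=}x)\,q_y$ over the $y$ adjacent to $x$, and symmetrically for $q_y$; hence $\mathrm{conv}\{p_x:x\in B\}=\mathrm{conv}\{q_y:y\in B\}$. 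Pick a vertex $v$ of this polytope, say $v=p_{x_0}$. A vertex is not a nontrivial convex combination of points of the polytope, so $q_y=v$ for all $y$ adjacent to $x_0$; running the same argument at those $y$, then at their $X$-neighbours, and so on, the connectedness of $B$ forces $p_x=q_y=v$ throughout $B$.

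Now let $Q$ be the component label (a common function again); the lemma says $\I(X;Z\|Q)=\I(Y;Z\|Q)=0$, so $\I(X;Z)=\I(XQ;Z)=\I(Q;Z)$. For {\upshape 1)}, suppose $\langle 0,0,r\rangle\in\T(X;Y)$ with $r<\I(X;Y)$, witnessed by $Z$; then $\I(X;Z)=\I(X;Z)-\I(X;Z\|Y)=\I(X;Y)-\I(X;Y\|Z)=\I(X;Y)-r>0$, whence $\I(Q;Z)>0$, $H(Q)>0$, and the matrix has block structure. For {\upshape 2)}, take $r=0$, so also $\I(X;Y\|Z)=0$. Since $Q$ is a function of $X$ and of $Y$, decomposing over $Q{=}i$ shows $\I(X;Z\|Y,Q{=}i)=0$ and $\I(X;Y\|Z,Q{=}i)=0$ (they are nonnegative summands of $\I(X;Z\|Y)=0$ and of $\I(X;Y\|QZ)\le\I(X;Y\|Z)=0$), while $\I(X;Z\|Q{=}i)=0$ by the lemma. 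Therefore the conditional triple information $\I(X;Y;Z\|Q{=}i)$ equals both $\I(X;Z\|Q{=}i)-\I(X;Z\|Y,Q{=}i)=0$ and $\I(X;Y\|Q{=}i)-\I(X;Y\|Z,Q{=}i)=\I(X;Y\|Q{=}i)$, so $\I(X;Y\|Q{=}i)=0$ for every $i$: $X$ and $Y$ are independent within each block. Combined with the easy directions, this yields both equivalences.

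The one genuine obstacle is the double Markov lemma; the information identities surrounding it are routine bookkeeping. Extracting a combinatorial splitting of the support of $(X,Y)$ from the purely entropic double-Markov hypothesis is exactly the point at which the proof in \cite{gacs-korner} invoked ergodic theory and the one in \cite{MMchain} invoked chain independence, whereas here it is the single convex-geometry paragraph above. It then only remains to check the degenerate configurations — $X$ and $Y$ independent (the segment shrinks to a point), one variable a function of the other, or $Q$ already constant — where the statements continue to hold without difficulty.
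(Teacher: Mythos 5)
Your proof is correct, and for part 2) it is a genuinely different and in fact more elementary argument than the one in the paper.

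For part 1) you proceed essentially as the paper does: the double-Markov hypotheses $\I(X;Z\|Y)=\I(Y;Z\|X)=0$ force the conditional law of $Z$ to be constant across each connected block of the support graph, after which $\I(X;Z\|Q)=\I(Y;Z\|Q)=0$ and the interaction-information bookkeeping give the conclusion. Your convex-geometry proof of the ``double Markov lemma'' is correct but heavier than necessary: the two Markov conditions already yield $p_x=\Prob(Z{=}\cdot\|X{=}x,Y{=}y)=q_y$ directly for every supported pair $(x,y)$, so on a connected block all $p_x$ and $q_y$ coincide with no need to pass to the polytope $\mathrm{conv}\{p_x\}$ and pick vertices. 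This is exactly the paper's observation that the $Z_{ij}$ agree along rows and along columns.

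Part 2) is where the two proofs truly diverge. The paper's route is to invoke the MMRV non-Shannon inequality $\ing_{UVXY}+\Delta_\XYZ\ge 0$, exhibit a pair $UV$ built from a bad $2\times 2$ minor of the probability matrix, and verify by a Taylor expansion that the Ingleton value goes strictly negative for small $q>0$; the inequality then yields $\Delta_\XYZ>0$ for every $Z$. You instead deduce, from $\I(X;Z\|Y)=\I(Y;Z\|X)=\I(X;Y\|Z)=0$ and the lemma, that $\I(X;Z\|Q{=}i)=\I(X;Z\|Y,Q{=}i)=\I(X;Y\|Z,Q{=}i)=0$ for each block, and then the two decompositions of the conditional interaction information force $\I(X;Y\|Q{=}i)=0$. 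This is a short, purely Shannon-type argument: no non-Shannon inequality, no explicit $UV$ construction, no perturbation analysis. The tradeoff is expository rather than mathematical: the point of the paper, announced in its abstract, is precisely to illustrate how a single instance of auxiliary variables $UV$ with $\ing_{UVXY}<0$ restricts \emph{all} $Z$ via MMRV, and the paper's proof of part 2) is the showcase for that technique. Your proof dispenses with that machinery entirely, which makes it leaner but also means it does not demonstrate the phenomenon the paper is built around. As a self-contained verification of Theorem~\ref{thm:alt} it is complete and sound; you correctly handle the easy converses with $Z=Q$ and note the degenerate cases at the end.
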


The $\le$ part of the characterization in (\ref{eq:GK}) suffices to deduce
the original form of the G\'acs-K\"orner theorem from Theorem \ref{thm:alt},
and that part follows easily from the claims in Theorem \ref{thm:basic}.
This is discussed in more deatails in Section \ref{sec:MMRV}.

\smallskip

The second ingredient is a special non-Shannon type linear entropy
inequality. The first unconditional entropy inequality which is not a
consequence of the basic Shannon inequalities was discovered by Zhang and
Yeung \cite{ZhY.ineq} in 1998. All such inequalities found so far can be
obtaioned by (possibly iterated) application(s) of the following principle,
see \cite{tarik}.

Let $A$, $B$, and $C$ be disjoint sets of (jointly distributed) random
variables, and $\phi_{AB}$ and $\psi_{BC}$ be linear combinations of entropy
terms containing only variables from $AB$ and $BC$, respectively. Suppose
that for some constant $k$
\begin{equation}\label{eq:princ1}
    \phi_{AB} + \psi_{BC} \ge -k\cdot\I(A;C\|B)
\end{equation}
is a valid entropy inequality (for example, it is a consiequence of basic
Shannon inequalities). Then
\begin{equation}\label{eq:princ2}
   \phi_{AB} + \psi_{BC} \ge 0
\end{equation}
is also a valid entropy inequality.

Indeed, given any distribution on $ABC$, one can use the probabilities
$p'(abc)=p(ab)\cdot p(bc)/p(b)$ to define a new distribution $A'B'C'$ such
that the marginals on $AB$ and $BC$ remain the same, while
$\I(A';C'\|B')=0$. Then $\phi_{AB}=\phi_{A'B'}$, $\psi_{BC}=\psi_{B'C'}$ as
the marginals are the same, thus using inequality (\ref{eq:princ1}) for
$A'B'C'$ one gets
$$
    \phi_{AB}+\psi_{BC}=\phi_{A'B'} + \psi_{B'C'}
     \ge -k\cdot\I(A';C'\|B') = 0,
$$
proving (\ref{eq:princ2}).

The observation mentioned in the abstract comes from inequality
(\ref{eq:princ2}). Fix the random variables in $B$. If there is a
\emph{single} instance of the random variables $A$ jointly distributed with
$B$ such that $\phi_{AB} \le -\eps$, then this fact imposes
$\psi_{BC}\ge\eps$ for \emph{all} possible random variables $C$ jointly
distributed with $B$. This will be used to deduce that under the specified
conditions the origin is separated from the tension region. The relevant
entropy inequality was found by Makarychev et al \cite{MMRV}, and will be
referred to as MMRV. It is
\begin{equation}\label{eq:MMRV}
  \ing_{UVXY}+\Delta_\XYZ\ge 0,
\end{equation}
where $\ing_{UVXY}$ is the Ingleton expression
$$
  -\I(X;Y)+\I(X;Y\|U)+\I(X;Y\|V)+\I(U;V),
$$
and $\Delta_\XYZ$ is
$$
 \I(X;Z\|Y)+\I(Y;Z\|X)+\I(X;Y\|Z).
$$
The MMRV inequality can be verified by the above principle since
$$
  \ing_{UVXY}+\Delta_\XYZ \ge -3\cdot\I(UV;Z\|XY)
$$
is a consequence of the basic Shannon inequalities.

Returning to the second part of Theorem \ref{thm:alt}, we must show that the
origin is not in the tension region $\T(X;Y)$ except when the joint
distribution of $X$ and $Y$ satisfies the stated properties. Observe that
the origin is not in $\T(X;Y)$ if and only if $\Delta_\XYZ>0$ for all
distributions $Z$. According to the MMRV inequality this is the case if
there are distributions $U$ and $V$ such that $\ing_{UVXY}<0$. Thus
exhibiting a single instance of such variables under the given constraints
on $X$ and $Y$ implies immediately the claim. This plan is carried out in
Section \ref{sec:MMRV}..

\def\[{[\mskip -2mu[} \def\]{]\mskip -2mu]}
\def\cd{{\cdot}}

\section{The G\'acs-K\"orner theorem}\label{sec:MMRV}

In this section first we outline how the $\le$ part of the characterization
of the G\'acs-K\"orner information in (\ref{eq:GK}) follows from the claims
of Theorem \ref{thm:basic}. Next we discuss the $\ge$ part, and then proceed
to prove both parts of Theorem \ref{thm:alt}.

By definition, the G\'acs-K\"orner common information $\GK=\GK(X;Y)$ is the
rate of randomness which can be simultaneously extracted from both $X$ and
$Y$ \cite{gacs-korner}. That is, for each $\eps>0$ there is an $n$ and a random variable
$Z$ jointly distributed with $n$ {i.i.d.} copies of $(X,Y)$ such that
$n(\GK-\eps) < H(Z)$, while $Z$ is almost determined by both $X^n$ and
$Y^n$, meaning $\H(Z\|X^n)<n\eps$ and $\H(Z\|Y^n)<n\eps$. These conditions
imply
$$
   \I(X^n;Z|Y^n)<n\eps, ~~ \I(Y^n;Z\|X^n)<n\eps,
$$
and also
\begin{align*}
   \I(X^n;Y^n\|Z) &< \I(X^n;Y^n)+2n\eps -H(Z) <{}\\
                  &< n\big( \I(X;Y)-\GK +3\eps \big).
\end{align*}
This means that for each $\eps>0$ there is an $n$ and a point
$(x_n,y_n,z_n)$ in $\T(X^n;Y^n)$ with $x_n,y_n<n\eps$ and $z_n<
n\big(\I(X;Y)-\GK+3\eps\big)$. Since $\T(X^n;Y^n)\ge n\cdot\T(X;Y)$ it follows
that there is an $(x'_n,y'_n,z'_n) \in \T(X;Y)$ with $x'_n\le x_n/n$,
$y'_n\le y_n/n$, $z'_n\le z_n/n$. As $\T(X;Y)$ is compact,
it also contains a limit of these points as $\eps$ tends to zero.
That is $(0,0,r)\in\T(X;Y)$ for some $r\le \I(X;Y)-\GK$. Consequently
$$
   \min\{r: (0,0,r)\in\T(X;Y)\,\} \le \I(X;Y)-\GK(X;Y),
$$
whis proves the $\le$ part of (\ref{eq:GK}). As it has been remarked before,
it suffices for concluding the G\'acs-K\"orner theorem from Theorem
\ref{thm:alt}. The $\ge$ part is an easy consequence of the following lemma
attributed to Ahlswede and K\"orner and stated explicitly in \cite{tarik}.
For a self-contained proof consult \cite{MMRV}.

\begin{lemma}[Ahlswede-K\"orner \cite{AGK,CsK81}]
Consider $n$ {i.i.d.} copies of $X$, $Y$ and $Z$. There is a random variable $Z'$ 
jointly distributed with $(X^n,Y^n,Z^n)$ such that
\begin{itemize}
\item $\H(Z' \| X^n,Y^n)=0$,
\item $\H(X^n\|Z') = n\H(X\|Z)+o(n)$,
\item $\H(Y^n\|Z') = n\H(Y\|Z)+o(n)$,
\item $\H(X^n,Y^n\|Z') = n\H(X,Y\|Z)+o(n)$. \qed
\end{itemize}
\end{lemma}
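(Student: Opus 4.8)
The plan is to build $Z'$ by a standard random type‑covering construction: $Z'$ will record which ``$z^n$‑cloud'' the observed pair $(X^n,Y^n)$ falls into. Fix a small $\delta>0$, let $\mathcal A^n_\delta(\cdot)$ denote the (robustly) $\delta$‑typical sequences for the appropriate marginals of $n$ i.i.d.\ copies of $(X,Y,Z)$, and draw $M=\lceil 2^{n(\I(Z;XY)+\delta)}\rceil$ codewords $z^n(1),\dots,z^n(M)$ independently from $P_Z^n$. By the covering lemma (\cite{CsK81}), since $\frac1n\log M>\I(Z;XY)$, for all large $n$ there is a fixed codebook for which the probability that $(X^n,Y^n)$ is $\delta$‑typical yet jointly $\delta$‑typical with no $z^n(m)$ is at most $\eta_n\to0$. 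Fix such a codebook and let $Z'=Z'(X^n,Y^n)$ be the smallest index $m$ with $(X^n,Y^n,z^n(m))\in\mathcal A^n_\delta(XYZ)$, and $Z'=0$ when no such $m$ exists. Then $Z'$ is a deterministic function of $(X^n,Y^n)$ — in particular a random variable jointly distributed with $(X^n,Y^n,Z^n)$ satisfying $\H(Z'\|X^n,Y^n)=0$ — and $p:=\Prob(Z'=0)=o(1)$; note also that every value $m\ne0$ taken by $Z'$ has $z^n(m)$ itself $Z$‑typical.

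The three upper bounds are pure counting. Conditioned on $\{Z'=m\ne0\}$ the string $X^n$ lies in the conditional typical set $\{x^n:(x^n,z^n(m))\in\mathcal A^n_\delta(XZ)\}$, of size at most $2^{n(\H(X\|Z)+\eps_\delta)}$ with $\eps_\delta\to0$ as $\delta\to0$; likewise $Y^n$ and the pair $(X^n,Y^n)$ are confined to sets of sizes $\le 2^{n(\H(Y\|Z)+\eps_\delta)}$ and $\le 2^{n(\H(X,Y\|Z)+\eps_\delta)}$. Averaging over $m$ and bounding the contribution of $\{Z'=0\}$ by $p\cdot n\log(|\mathcal X||\mathcal Y|)=o(n)$ gives $\H(X^n\|Z')\le n\H(X\|Z)+o(n)$, $\H(Y^n\|Z')\le n\H(Y\|Z)+o(n)$, and $\H(X^n,Y^n\|Z')\le n\H(X,Y\|Z)+o(n)$, where the $o(n)$'s depend on $\delta$.

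For the matching lower bounds, first observe that $Z'$ takes at most $M+1$ values, so $\I(X^nY^n;Z')\le\log(M+1)\le n(\I(Z;XY)+\delta)+O(1)$, whence
$$
\H(X^n,Y^n\|Z')\;\ge\;n\H(X,Y)-n\I(Z;XY)-n\delta-O(1)\;=\;n\H(X,Y\|Z)-n\delta-O(1).
$$
For the two marginals we use the chain rule together with one more counting bound: conditioning on $\{Z'=m\ne0\}$ and on $X^n=x^n$ forces $Y^n$ into $\{y^n:(x^n,y^n,z^n(m))\in\mathcal A^n_\delta(XYZ)\}$, a set of size $\le 2^{n(\H(Y\|XZ)+\eps_\delta)}$, so $\H(Y^n\|X^n,Z')\le n\H(Y\|XZ)+o(n)$ and therefore
$$
\H(X^n\|Z')\;=\;\H(X^n,Y^n\|Z')-\H(Y^n\|X^n,Z')\;\ge\;n\H(X\|Z)-n\delta-o(n),
$$
and symmetrically $\H(Y^n\|Z')\ge n\H(Y\|Z)-n\delta-o(n)$. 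Combining the upper and lower estimates and then letting $\delta=\delta_n\to0$ slowly enough — a diagonal argument choosing $\delta_n$ so that all the $\delta$‑dependent $o(n)$ and $\eps_\delta n$ terms become $o(n)$ — yields the four claimed equalities.

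The only ingredient that is not elementary counting with typical sequences and the chain rule is the covering lemma, which supplies a single codebook of just $2^{n(\I(Z;XY)+\delta)}$ codewords covering almost all typical pairs; that is where I expect the main work to sit. The other point to get right is conceptual rather than technical: the lower bounds on $\H(X^n\|Z')$ and $\H(Y^n\|Z')$ cannot be read off from $\H(Z')$ directly — which only controls the joint entropy $\H(X^n,Y^n\|Z')$ — and must instead be extracted via the chain rule from the joint lower bound and a conditional typical‑set count.
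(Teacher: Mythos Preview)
The paper does not actually prove this lemma: it is stated with a \qed and accompanied only by the remark ``For a self-contained proof consult \cite{MMRV}.'' So there is no in-paper argument to compare against.

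Your proposal is the standard covering/type argument and is correct. The construction makes $Z'$ a function of $(X^n,Y^n)$, which immediately gives $\H(Z'\|X^n,Y^n)=0$ and places $Z'$ on the same probability space as $(X^n,Y^n,Z^n)$. The upper bounds follow from conditional-typical-set cardinalities, and your handling of the lower bounds is the right one: bounding $\H(Z')\le\log(M{+}1)=n\I(XY;Z)+n\delta+O(1)$ gives the joint lower bound, and then the chain rule together with the conditional count $\H(Y^n\|X^n,Z')\le n\H(Y\|XZ)+o(n)$ (and its symmetric version) recovers the marginal lower bounds. The diagonalization over $\delta$ is routine. The only nontrivial external input is indeed the covering lemma at rate $\I(Z;XY)$; everything else is elementary typical-sequence bookkeeping, exactly as you diagnose. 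This is essentially the argument one finds in \cite{CsK81} and \cite{MMRV}, so your write-up would serve perfectly well as the ``self-contained proof'' the paper defers to.
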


For the $\ge$ part of (\ref{eq:GK}) suppose $(0,0,r)\in \T(X;Y)$ is
witnessed by the distribution $Z$. As the first two values in this triplet
are zero, we have
$$
  \I(X;Z)=\I(Y;Z)=\I(X,Y;Z)=\I(X;Y)-r.
$$
Let $Z'$ be the distribution guaranteed by the Ahlswede-K\"orner lemma. An
easy calculation shows that $\H(Z'\|X^n)=o(n)$, $\H(Z'\|Y^n)=o(n)$, and
$$
   \H(Z')=n\I(X,Y;Z)+o(n)= n\big(\I(X;Y)-r\big)+o(n),
$$
providing the rate $\I(X,Y)-r$ which lower bounds $\GK$.

\medskip

Let us turn to the proof of the two statements in Theorem \ref{thm:alt}.
Fix the distributions $X$ and $Y$ on the alphabets $\mathcal
X=\{1,\dots,N\}$ and $\mathcal Y=\{1,\dots, M\}$. Elements of $\mathcal X$
will be denoted by $i$, and elements of $\mathcal Y$ by $j$. Let the probability
of the event $X{=}i, Y{=}j$ be $p_{ij}$. For simplicity we assume that each
element of $\mathcal X$ and each element of $\mathcal Y$ is taken with
non-zero probability.

Vertices of the \emph{block graph} are elements of $\mathcal X \times \mathcal
Y$ with positive probability, and two vertices are connected if they are in
the same row or in the same column. Blocks of the distribution are the
connected components of this graph, and the distribution has no block
structure if this graph is connected.

Now we have all ingredients to prove the first part of Theorem \ref{thm:alt}.
Let $Z$ be a distribution with
$$
   \I(X;Z\|Y)=0, ~~\I(Y;Z\|X)=0, \mbox{ and } \I(X;Y\|Z)=r
$$
for some value $r$, and consider the conditional distributions $Z_{ij} = Z
\|(X{=}i,Y{=}j)$ for $p_{ij}>0$. As $X$ and $Z$ are conditionally
independent assuming $Y$, conditional distributions in the same row (indexed
by elements of $\mathcal Y$) are equal. Similarly, $\I(Y;Z\|X)=0$ implies
that conditional distribution in the same column (indexed by elements of
$\mathcal X$) are also equal. If the block graph is connected, then this implies
that all conditional distributions are the same, meaning that $Z$ is
independent of both $X$ and $Y$. Consequently $r=\I(X;Y\|Z)=\I(X;Y)$ thus
$\GK(X;Y)=0$, as was claimed.

\medskip 
Now we turn to the second part of Theorem \ref{thm:alt}. The exceptional
case requires $X$ and $Y$ to be independent in each connected component of
the block-graph. It means that the non-zero probabilities in the
distribution matrix are split into the disjoint union of combinatorial
rectangles, that is, sets of the form $I \times J$ with $I\subseteq \mathcal
X$ and $J \subseteq \mathcal Y$; moreover in each such a rectangle $X$ and
$Y$ are independent. So assume this is not the case. Then there are
$i_1,i_2\in\mathcal X$ and $j_1,j_2\in\mathcal Y$ such that denoting the
probabilities $p_{i_1j_1}$, $p_{i_1j_2}$, $p_{i_2j_1}$ and $p_{i_2j_2}$ by
$\alpha$, $\beta$, $\gamma$, $\delta$, respectively, one of the following
possibilities hold:
\begin{itemize}
\item[(i)] $\alpha$, $\beta$, $\gamma$ are positive, while $\delta=0$
   (some block is not a combinatorial rectangle),
\item[(ii)] all $\alpha$, $\beta$, $\gamma$, $\delta$  are positive, but 
   $\alpha\delta \neq \beta\gamma$ ($X$ and $Y$ are not independent in 
   some block).
\end{itemize}

By relabeling $\mathcal X$ and $\mathcal Y$ we may assume $i_1=j_1=1$ and
$i_2=j_2=2$, thus $p_{11}=\alpha$, $p_{12}=\beta$, $p_{21}=\gamma$ and
$p_{22}=\delta$. As explained in Section \ref{sec:intro}, to conclude that
the origin is separated from $\T(X;Y)$ it suffices to find random variables $U$ and
$V$ such that $\ing_{UVXY}<0$. To this end choose the probability $p$ close
to $1$, and let $q=1-p$. Define $U$ and $V$ as follows. With probability $p$
$U=X$ and $V=Y$. With probability $q$ $U=\max\{2,X\}$ and
$V=\max\{2,Y\}$.
The probabilities of the joint distribution of $UVXY$ are summarized in
Table \ref{table:dist}, where $\alpha=p_{11}$, $\beta=p_{12}$,
$\gamma=p_{21}$ and $\delta=p_{22}$.

\begin{table}[htb]
\begin{center}
\begin{tabular}{@{}|c@{}c@{}c@{}cl@{\quad}|@{\quad}c@{}c@{}c@{}cl|}
\hline\rule{0pt}{11pt}%
$U$&$ V$&$ X$&$ Y$& Prob &%
$U$&$ V$&$ X$&$ Y$& Prob \\
\hline\rule{0pt}{11pt}%
$1$&$1$&$1$&$1$ & $\alpha\cd p$ & $i$&$1$&$i$&$1$ & $p_{i1}\cd p$\\
$2$&$2$&$1$&$1$ & $\alpha\cd q$  & $i$&$2$&$i$&$1$ & $p_{i1}\cd q$\\
$1$&$2$&$1$&$2$ & $\beta\cd p$  & $1$&$j$&$1$&$j$ & $p_{1j}\cd p$\\
$2$&$2$&$1$&$2$ & $\beta\cd q$ & $2$&$j$&$1$&$j$ & $p_{1j}\cd q$\\
$2$&$1$&$2$&$1$ & $\gamma\cd p$&$i$&$j$&$i$&$j$ & $p_{ij}$\\
$2$&$2$&$2$&$1$ & $\gamma\cd q$&
 \multicolumn{5}{l|}{\hspace{-0.6em}\footnotesize either $i>1$ or $j>1$}\\[2pt]
\hline
\end{tabular}\end{center}
\caption{The distribution $UVXY$}\label{table:dist}
\end{table}

Denote the Ingleton value of this distribution by $\ing(q)$. It is clear
that when $q=0$ (and $p=1$) then $U$ is identical to $X$ and $V$ is
identical to $Y$, thus $\ing(0)=0$. Therefore it suffices to show that the
function $\ing(q)$ is decreasing at $q=0$.

In the marginal distributions of $XU$, $XV$, $YU$, $YV$, $XYU$, $XYV$ and
$UV$ probabilities depending on $p$ or $q$ are summarized in Table
\ref{table:list}. The marginal distributions of $X$ and $Y$ are denoted by
$x_i$ and $y_j$, respectively. In the summary $i$ and $j$ run over values
bigger than 2.

\begin{table}[htb]
\begin{center}\begin{tabular}{|l|l|}
\hline\rule{0pt}{11pt}%
$UX$ & $x_1\cd p$, $x_1\cd q$\\[1.5pt]
$VX$ & $\alpha p$, $\alpha q+\beta$, $\gamma p$, $\gamma q+\delta$,
       $p_{i1}\cd p$, $p_{i1}\cd q+p_{i2}$\\[1.5pt]
$UY$ & $\alpha p$, $\alpha q+\gamma$, $\beta p$, $\beta q+\delta$,
       $p_{1j}\cd p$, $p_{1j}\cd q+p_{2j}$\\[1.5pt]
$VY$ & $y_1\cd p$, $y_1\cd q$\\[3pt]
$UXY$ & $\alpha p$, $\alpha q$, $\beta p$, $\beta q$, $p_{1j}\cd p$,
       $p_{1j}\cd q$\\[1.5pt]
$VXY$ & $\alpha p$, $\alpha q$, $\gamma p$, $\gamma q$, $p_{i1}\cd p$,
       $p_{i1}\cd q$\\[1.5pt]
$UV$ & $\alpha p$, $\beta p$, $\gamma p$,
       $\alpha q+\beta q+\gamma q+\delta$, \\[1pt]
     & ~~~ $p_{i1}\cd p$, $p_{i1}\cd q+p_{i2}$,$p_{1j}\cd p$, 
       $p_{1j}\cd q+p_{2j}$\\[2pt]
\hline
\end{tabular}\end{center}
\caption{Marginals of $UVXY$ depending on $p$ or $q$}\label{table:list}
\end{table}

Let $\[x\]=h(x)=-x\log x$ where $\log$ is the natural logarithm. The
entropy, up to the scaling factor $\log 2$, is the sum of the $h$-value of
the corresponding probabilities. Probabilities occuring in the Ingleton
expression $\ing(q)$ which depend on $p$ or $q$ are listed in Table
\ref{table:list}. The $h$-value of the probabilities in the first four lines
are added, and those in the last three lines are subtracted. Multiplications
inside $\[xy\]$ can be eliminated using the identity $\[xy\] =
x\cdot\[y\]+y\cdot\[x\]$, for example, $\[x\cd p\]+\[x\cd q\] = \[x\] +
x\cd(\[p\]+\[q\])$. Using these rules part of $\ing(q)$ which depends on $p$
and $q$ simplifies to
\begin{align}\label{eq1}
    & \[\alpha - \alpha q\] + \[\beta+\alpha q\]+\[\gamma+\alpha q\] +{}\\
    & +\[\delta+\beta q\] + \[\delta+\gamma q\] -  \[\delta+\alpha q+
      \beta q+\gamma q \] \nonumber.
\end{align}
For $q=0$ this espression equals $\[\alpha\]+\[\beta\]+\[\gamma\]+\[\delta\]$,
and we want to show that it strictly decreases for small positive values of $q$.
For positive $x$ we have
$$
   \[x+\eps\] = \[x\]-\eps-\eps\log x + O(\eps^2).
$$
If
$\alpha$, $\beta$, $\gamma$ and $\delta$ are all positive, then
(\ref{eq1}) rewrites to
$$
 \[\alpha\]+\[\beta\]+\[\gamma\]+ \[\delta\] +\alpha q \log\frac
 {\alpha\delta}{\beta\gamma} + O(q^2).
$$
The clearly decreases for small value of $q$ when $\alpha\delta<
\beta\gamma$, which can be clearly achieved by optionally swapping 1 and 2 in $\mathcal
X$.

In the other considered case $\alpha$, $\beta$ and $\gamma$ are positive
and $\delta=0$, when (\ref{eq1}) rewrites to
$$
 \[\alpha - \alpha q\] + \[\beta+\alpha q\]+\[\gamma+\alpha q\] +
 +q\[\beta\]+q\[\gamma\]-q\[\alpha+\beta+\gamma\]-\alpha\[q\].
$$
It is clearly decreasing for small positive values of
$q$ as $\[x\]$ has plus infinite derivative at zero, and finite derivative
everywhere else. This proves the G\'acs-K\"orner theorem.


\section*{Acknowledgment}
The research reported in this paper has been partially supperted by the
ERC Advanced Grant ERMiD,

\end{document}